\newtheorem{theorem}{Theorem}[section]
\newtheorem{lem}[theorem]{Lemma}
\newtheorem{pro}[theorem]{Proposition}
\theoremstyle{definition}
\theoremstyle{remark}
\newtheorem{rem}[theorem]{Remark}
\numberwithin{equation}{section}
\def\R{\mathbb{R}}
\def\O{\Omega}
\def\e{\epsilon}
 \DeclareMathOperator{\diam}{diam}
\def\RR{{\rm I}\!{\rm R}}
\def\1{(1+|\xi|)}
\def\O{\Omega}
\def\tt{\tilde\theta}
\begin{document}
\title []{On the existence of bounded solutions for a nonlinear elliptic
system}

\author{Ricardo G. Dur\'an}
\address{Departamento de Matem\'atica, Facultad de Ciencias Exactas y Naturales,
Universidad de Buenos Aires, 1428 Buenos Aires, Argentina} \email{rduran@dm.uba.ar}
\author{Marcela Sanmartino}
\address{Departamento de Matem\'atica, Facultad de Ciencias Exactas,
Universidad Nacional de La Plata, 1900 La Plata (Buenos Aires), Argentina }

\email{tatu@mate.unlp.edu.ar}
\author{ Marisa Toschi}
\address{Departamento de Matem\'atica, Facultad de Ciencias Exactas,
Universidad Nacional de La Plata, 1900 La Plata (Buenos Aires), Argentina }

\email{mtoschi@mate.unlp.edu.ar}

\thanks{Supported by ANPCyT (PICT 01307), by Universidad de
Buenos Aires (grant X070), by Universidad Nacional de La Plata (grant X500), 
and by CONICET (PIP 11220090100625).
The first author is a member of CONICET, Argentina.}



\begin{abstract}

This work deals with the system $(-\Delta)^m u= a(x)\, v^p$, $(-\Delta)^m v=b(x)\, u^q$ with
Dirichlet boundary condition in a domain $\Omega\subset\RR^n$, where $\Omega$ is a ball if $n\ge 3$
or a smooth perturbation of a ball when $n=2$.

We prove that, under appropriate conditions on the parameters ($a,b,p,q,m,n$), any non-negative solution
$(u,v)$ of the system is bounded by a constant independent of $(u,v)$. Moreover, we prove
that the conditions are sharp in the sense that, up to some border case, the relation
on the parameters are also necessary.

The case $m=1$ was considered by Souplet in \cite{PS}. Our paper
generalize to $m\ge 1$ the results of that paper. 
\end{abstract}

\maketitle

\section{Introduction}

In this paper we consider the nonlinear problem

\begin{eqnarray}
\label{1.1}
\left\{\begin{array}{ccc}
(-\Delta)^m u= a(x)\, v^p &\mbox{ in }\Omega\\
(-\Delta)^m v=b(x)\, u^q&\mbox{ in }\Omega\\
\left(\frac{\partial}{\partial \nu}\right)^{j}u=\left(\frac{\partial}{\partial \nu}\right)^{j}v=0
&\mbox{ on }\partial\Omega
& 0\leq j\leq m-1,\end{array}\right.
\end{eqnarray}
where $\Omega$ is the unit ball, namely, $\Omega=B=\{x\in\R^n\,:\,|x|< 1\}$ when $n\geq 3$,
and $B$ or some perturbations of $B$ for the case $n=2$ (see \cite{DS2} for details of this perturbation),
$\frac{\partial}{\partial \nu} $ is the normal derivative, $p,\, q > 0$, $pq > 1$,
and $a,\, b$ are nonnegative bounded functions. Let us remark that the restriction on the
domains is due to the fact that we will use that the Green function of the corresponding
linear problem is positive.

For the case $m=1$, a priori bounds for non-negative solutions of (\ref{1.1}) in a $C^2$
bounded domain $\Omega$ were
obtained by P. Souplet in \cite{PS}. To recall the results in that paper we introduce
$$
\alpha= \frac{2(p+1)}{pq-1}\ \mbox{     and     }\  \beta=\frac{2(q+1)}{pq-1}.
$$
Souplet proved that, if $\max\{\alpha,\beta\}>n-1$, then
\begin{equation}
\label{cota1}
\|u\|_{L^\infty(\O)} ,\, \|v\|_{L^\infty(\O)}\leq C,
\end{equation}
where the constant $C$ depends only on $p,q,a,b$, and $\Omega$.

Moreover, he proved that the result is sharp in the sense that, if
$\max\{\alpha,\beta\}<n-1$, then there exist a non-negative solutions of (\ref{1.1})
which are not bounded.

Our goal is to obtain similar results for non-negative solutions of (\ref{1.1})
for a general $m$.

A key tool used in \cite{PS} are some weighted a priori
estimates for the associated linear problem given by
\begin{eqnarray*}
\left\{\begin{array}{ccc}
-\Delta u= f &\mbox{ in }\Omega\\
u=0 &\mbox{ on }\partial\Omega.
\end{array}\right.
\end{eqnarray*}

Then, in order to generalize the a priori estimates for the case $m\ge 2$
we will need to extend the weighted estimates to higher order linear
problems. Non trivial technical modifications are needed to prove those estimates.
Moreover, since we need to use positivity of the Green function, we have to restrict
the domain $\Omega$ as mentioned above. Indeed, for $m \ge 2$ and general regions
the Green function is not necessarily positive.

\section{Weighted a priori estimates for the linear problem}

We will denote by $d(x)$ the distance from $x$
to the boundary of $\Omega$ and we will work with the Banach
space $L^p_{d^m}(\Omega)$ where the norm is given by
$$
\|u\|_{L_{d^m}^{p}(\O)}
= \left(\int_\O |u|^p\, d^m\, dx\right)^{1/p}
$$
for $1\le p<\infty$ and  $\|u\|_{L_{d^m}^\infty(\O)}:=\|u\|_{L^\infty(\O)}$.

In our arguments we will use some results given in \cite{D-S}
for the linear problem
\begin{eqnarray}
\label{2.1}
\left\{\begin{array}{ccc}
(-\Delta)^m u= f &\mbox{ in }\Omega\\
\left(\frac{\partial}{\partial \nu}\right)^{j}u=0 &\mbox{ on
}\partial\Omega & 0\leq j\leq m-1.\end{array}\right.
\end{eqnarray}
We recall those results in the following lemma.
Let us remark that these results,
and consequently our proposition below,
are valid in more general domains than those considered here.
Indeed, the hypotheses used
 are
that $\O$ is a bounded domain with
$C^{6m+4}$ boundary for $n=2$ and
$C^{5m+2}$ boundary for $n> 2$.

\begin{lem}
\label{propDS}
Let $u\in C^{2m}(\overline{\O})$ and $f\in C(\overline\O)$ satisfy (\ref{2.1}).

$\bullet$ If $2m>n$, then there exists $C>0$ such that for all $\theta\in [0,1]$
 $$ \|u\, d^{-m+\theta n}\|_{L^\infty(\O)}
 \leq C\,  \|f\, d^{m-(1-\theta) n}\|_{L^1(\O)}.$$

 $\bullet$ Let $1\leq p\leq q\leq \infty$. If $\frac{1}{p}-\frac{1}{q}<\min \{\frac{2m}{n}, 1\}$,
 then taking $ \alpha\in(\frac{1}{p}-\frac{1}{q},\min \{\frac{2m}{n}, 1\}]$ there exists $C>0$
 such that for all $\theta\in [0,1]$
  $$ \|u\, d^{-m+\theta n\alpha}\|_{L^q(\O)}
 \leq C\,  \|f\, d^{m-(1-\theta) n\alpha}\|_{L^p(\O)}.
 $$
 \end{lem}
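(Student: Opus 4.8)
My plan is to deduce both estimates from the Green representation of the solution of (\ref{2.1}) together with the sharp pointwise \emph{upper} bounds for the polyharmonic Dirichlet Green function. First I would observe that, since $u\in C^{2m}(\overline\O)$ and $f\in C(\overline\O)$ solve (\ref{2.1}),
$$u(x)=\int_\O G_m(x,y)\,f(y)\,dy,$$
where $G_m$ is the Green function of $(-\Delta)^m$ with the boundary conditions in (\ref{2.1}). Next I would invoke the classical Krasovskii-type bounds for $|G_m(x,y)|$ valid on a bounded domain of the assumed regularity: writing $d=d(\cdot)$, one has $|G_m(x,y)|\le C|x-y|^{2m-n}\min\{1,\,d(x)^m d(y)^m/|x-y|^{2m}\}$ when $2m<n$, a version with a logarithm in place of $|x-y|^{2m-n}$ when $2m=n$, and $|G_m(x,y)|\le C\,d(x)^{m-n/2}d(y)^{m-n/2}\min\{1,\,d(x)^{n/2}d(y)^{n/2}/|x-y|^n\}$ when $2m>n$. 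Only the upper bounds enter, which is why (as the remark indicates) positivity of $G_m$ plays no role here and the statement survives on general smooth domains. With this in hand, for $\theta\in[0,1]$ I set
$$K_\theta(x,y):=d(x)^{-m+\theta n\beta}\,G_m(x,y)\,d(y)^{-m+(1-\theta)n\beta},$$
with $\beta=1$ in the first item and $\beta=\alpha$ in the second. Because the substitution $g=f\,d^{m-(1-\theta)n\beta}$ turns the left-hand side of each inequality into $\|K_\theta g\|_{L^q(\O)}$ and the right-hand side into $\|g\|_{L^p(\O)}$, it suffices to bound the integral operator with kernel $K_\theta$ from $L^p(\O)$ to $L^q(\O)$, with a norm independent of $\theta$.

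For the first item ($p=1$, $q=\infty$, $2m>n$) this reduces to a short computation: the $L^1\to L^\infty$ bound follows once $\sup_{x,y\in\O}|K_\theta(x,y)|\le C$, and inserting the $2m>n$ kernel bound reduces \emph{that}, with $a=d(x)$, $b=d(y)$, $\rho=|x-y|$ (so $|a-b|\le\rho$ and $a,b,\rho\le\diam\O$), to the elementary inequality $(a/b)^{(\theta-1/2)n}\min\{1,(ab/\rho^2)^{n/2}\}\le C$. If $ab\ge\rho^2$, then $|a-b|\le\rho\le\sqrt{ab}$ forces $a$ and $b$ to be comparable up to a universal constant, so the left side is bounded since $(\theta-1/2)n$ ranges over a fixed bounded interval; if $ab<\rho^2$, the left side equals $a^{\theta n}b^{(1-\theta)n}\rho^{-n}$, and the weighted arithmetic--geometric inequality gives $a^{\theta}b^{1-\theta}\le\max\{a,b\}\le\rho+\min\{a,b\}\le2\rho$ (using $\min\{a,b\}\le\sqrt{ab}<\rho$), so the left side is at most $2^n$. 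Both bounds are uniform in $\theta$, which finishes the first item. This argument genuinely needs $2m>n$: for $2m\le n$ the Green function has a diagonal singularity (or logarithm), no $L^1\to L^\infty$ bound can hold, and this is precisely why the endpoint $p=1$, $q=\infty$ is excluded from the second item.

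For the second item I would apply the $L^p$--$L^q$ Schur/Young bound for integral operators: with $1/r:=1+1/q-1/p\in(0,1]$ (and $r<\infty$ away from the excluded endpoint), the operator is bounded $L^p\to L^q$ once $\sup_x\|K_\theta(x,\cdot)\|_{L^r(\O)}$ and $\sup_y\|K_\theta(\cdot,y)\|_{L^r(\O)}$ are finite uniformly in $\theta$ --- the familiar special cases being $p=1$ ($r=q$, via Minkowski's integral inequality) and $q=\infty$ ($r=p'$, via H\"older). By the $x\leftrightarrow y$ symmetry it is enough to estimate one of these. Substituting the Green bound, splitting $\O$ into $\{|x-y|^2<d(x)d(y)\}$ and its complement, and reducing each piece to integrals of the type $\int|x-y|^{\gamma}d(y)^{\delta}\,dy$ evaluated in polar coordinates about $x$ (using $d(y)\le\diam\O$, and $d(y)\ge d(x)-|x-y|$ where needed), one finds that integrability near the diagonal requires $(2m-n)r>-n$, i.e.\ $1/p-1/q<2m/n$, while uniformity as $x$ (or $y$) tends to $\partial\O$ forces $\alpha>1/p-1/q$; the prescribed range $\alpha\in(1/p-1/q,\min\{2m/n,1\}]$ is exactly what makes every exponent that occurs admissible, and in the borderline case $2m=n$ the extra logarithm only costs an arbitrarily small amount of integrability, supplied by the strict inequalities. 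The main obstacle is this last step: a careful accounting of all the exponents across the several subregions and the regimes $2m<n$, $2m=n$, $2m>n$, keeping every bound uniform in $\theta\in[0,1]$, and checking that the Green-function bounds used --- and the amount of boundary regularity they cost --- are those actually available under the hypotheses of \cite{D-S} (alternatively one could bypass $G_m$ by bootstrapping weighted elliptic estimates, which would account for the high regularity demanded there).
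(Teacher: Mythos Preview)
The paper does not prove this lemma at all: its entire ``proof'' is the single sentence ``See Proposition 4.2 in \cite{D-S}.'' So there is nothing in the paper to compare against; the result is quoted wholesale from Dall'Acqua--Sweers.

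Your sketch is a plausible reconstruction of how the cited result is actually obtained. Since \cite{D-S} is precisely a paper on pointwise estimates for the polyharmonic Green function and Poisson kernels, your route via the representation $u=\int G_m f$ together with the sharp upper bounds on $G_m$ and a Schur/Young argument for the weighted kernel is almost certainly the approach taken there. Your treatment of the first bullet (the $L^1\to L^\infty$ estimate for $2m>n$) is complete and correct: the reduction to bounding $(a/b)^{(\theta-1/2)n}\min\{1,(ab/\rho^2)^{n/2}\}$ and the two-case analysis using the $1$-Lipschitz property of $d$ are clean and give a constant uniform in $\theta$. For the second bullet, the interpolated Young inequality you invoke (bounded $L^1\to L^r$ from $\sup_y\|K(\cdot,y)\|_{L^r}$, bounded $L^{r'}\to L^\infty$ from $\sup_x\|K(x,\cdot)\|_{L^r}$, then Riesz--Thorin) is a valid tool and yields exactly the exponent relation $1/r=1+1/q-1/p$ you state. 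You are candid that the remaining work is the exponent bookkeeping across the regions and the three regimes $2m\lessgtr n$, and that this is where the constraint $\alpha\in(1/p-1/q,\min\{2m/n,1\}]$ must be seen to be sharp; this is honest, but it means your argument for the second bullet is an outline rather than a proof. In short: your approach matches what the paper defers to \cite{D-S}, with the first item done and the second correctly planned but not carried through.
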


\begin{proof}
See  Proposition 4.2 in \cite{D-S}.
\end{proof}

We have the following a priori estimates for solutions of
problem (\ref{2.1}).

\begin{pro}
\label{pro2.1}
Let $1\leq p\leq q\leq \infty$. Let $f\in L^p_{d^m}(\Omega)$ and
let $u$ be a weak solution of (\ref{2.1}).

We have
\begin{enumerate}
\item if $n\leq m$, then $u\in L^\infty(\Omega)$ and there exists $C>0$ such that
\begin{equation*}\label{nleqm}
\|u\|_{L^\infty(\Omega)}\leq C\, \|f\|_{L^1_{d^m}(\Omega)}
\end{equation*}
\item if $\frac{1}{p}-\frac{1}{q}< \frac{2m}{n+m}$, then
 $u\in L^p_{d^m}(\Omega)$ and there exists $C>0$ such that
\begin{equation*}\label{2m}
\|u\|_{L^q_{d^m}(\Omega)}\leq C\, \|f\|_{L^p_{d^m}(\Omega)}.
\end{equation*}
\end{enumerate}
\end{pro}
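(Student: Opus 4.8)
The plan is to derive Proposition \ref{pro2.1} from Lemma \ref{propDS} by choosing the free parameter $\theta$ appropriately so that the weight exponents on both sides of the inequalities collapse to $m$, which is precisely what is needed to recover the plain $L^p_{d^m}$ norms.

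For part (1), the hypothesis $n\le m$ guarantees $2m>n$, so the first bullet of Lemma \ref{propDS} applies: for every $\theta\in[0,1]$ we have $\|u\,d^{-m+\theta n}\|_{L^\infty(\O)}\le C\|f\,d^{m-(1-\theta)n}\|_{L^1(\O)}$. I would like to take $\theta=0$ on the left so that the exponent $-m+\theta n$ becomes $-m$; but that is the wrong sign, since we want $\|u\|_{L^\infty}$, i.e. exponent $0$, not $-m$. The correct move is $\theta$ chosen so that $-m+\theta n\ge 0$ while $m-(1-\theta)n\le m$; since $n\le m$, taking $\theta=1$ gives left exponent $n-m\ge 0$ (wait, that is $-m+n\le 0$), so instead I take $\theta$ with $\theta n\ge m$, i.e. $\theta\ge m/n\ge 1$, forcing $\theta=1$ when $n=m$ and being unavailable when $n<m$. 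The cleaner route, and the one I would actually write, is: since $n\le m$ we have $d^{-m+\theta n}\ge c\,d^{-m+\theta n}$ trivially and we just need the right endpoint. Take $\theta=1$: the estimate reads $\|u\,d^{n-m}\|_{L^\infty}\le C\|f\,d^m\|_{L^1}=C\|f\|_{L^1_{d^m}}$, and since $n-m\le 0$ and $d$ is bounded above by $\diam(\O)$, we get $d^{n-m}\ge (\diam\O)^{n-m}>0$, hence $\|u\|_{L^\infty}\le (\diam\O)^{m-n}\|u\,d^{n-m}\|_{L^\infty}\le C\|f\|_{L^1_{d^m}}$. That closes part (1), after the routine density/approximation argument reducing a weak solution to the regular setting of the lemma.

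For part (2), I would apply the second bullet of Lemma \ref{propDS}. The hypothesis there is $\frac1p-\frac1q<\min\{\frac{2m}{n},1\}$ and one picks $\alpha\in(\frac1p-\frac1q,\min\{\frac{2m}{n},1\}]$; the conclusion is $\|u\,d^{-m+\theta n\alpha}\|_{L^q}\le C\|f\,d^{m-(1-\theta)n\alpha}\|_{L^p}$ for all $\theta\in[0,1]$. To recover $\|u\|_{L^q_{d^m}}=\|u\,d^{m/q}\|_{L^q}$ on the left and $\|f\|_{L^p_{d^m}}=\|f\,d^{m/p}\|_{L^p}$ on the right, I need $-m+\theta n\alpha=m/q$ and $m-(1-\theta)n\alpha=m/p$ simultaneously. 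Adding these two equations gives $\theta n\alpha+(1-\theta)n\alpha=n\alpha=m/q+m/p$... no: subtracting, $-m+\theta n\alpha-m+(1-\theta)n\alpha = m/q-m/p$, i.e. $n\alpha-2m=m(1/q-1/p)$, so $n\alpha=2m-m(1/p-1/q)=m(2-(1/p-1/q))$, hence $\alpha=\frac{m}{n}\big(2-(\tfrac1p-\tfrac1q)\big)$. One must check this $\alpha$ lies in the admissible window $(\frac1p-\frac1q,\min\{\frac{2m}{n},1\}]$: the lower bound $\alpha>\frac1p-\frac1q$ and the constraint $\alpha\le\min\{\frac{2m}{n},1\}$ are both consequences of the hypothesis $\frac1p-\frac1q<\frac{2m}{n+m}$ (this inequality is exactly engineered so that the required $\alpha$ fits — indeed $\frac1p-\frac1q<\frac{2m}{n+m}\iff n\alpha<2m$ and also $\iff \alpha>\frac1p-\frac1q$ after substituting, and if $\frac{2m}{n}>1$ one replaces $\frac{2m}{n}$ by $1$ and argues similarly). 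Given such $\alpha$, solve $-m+\theta n\alpha=m/q$ for $\theta$: $\theta=\frac{m(1+1/q)}{n\alpha}=\frac{1+1/q}{2-(1/p-1/q)}$, which lies in $[0,1]$ precisely because $1+\frac1q\le 2-\frac1p+\frac1q\iff \frac1p\le1$, always true. Then automatically $m-(1-\theta)n\alpha=m/p$, and Lemma \ref{propDS} yields $\|u\|_{L^q_{d^m}}\le C\|f\|_{L^p_{d^m}}$.

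The main obstacle is the bookkeeping in part (2): verifying that the value $\alpha=\frac{m}{n}(2-(\frac1p-\frac1q))$ genuinely falls in the half-open interval the lemma requires, in both cases $\frac{2m}{n}\le 1$ and $\frac{2m}{n}>1$, and that the resulting $\theta$ is in $[0,1]$ — this is where the precise form of the hypothesis $\frac1p-\frac1q<\frac{2m}{n+m}$ gets used, and it is easy to be off by an endpoint. I would also need a brief approximation remark at the start: Lemma \ref{propDS} is stated for $u\in C^{2m}(\overline\O)$ and $f\in C(\overline\O)$, so for a general weak solution with $f\in L^p_{d^m}$ I would first prove the estimate for smooth $f$, then pass to the limit using density of $C(\overline\O)$ in $L^p_{d^m}(\O)$ together with the (linear, continuous) solution operator, so the a priori bound extends to all weak solutions.
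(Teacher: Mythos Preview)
Your proof of part~(1) matches the paper's exactly: take $\theta=1$ in the first bullet of Lemma~\ref{propDS} and use that $d^{n-m}$ is bounded below since $n\le m$.

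For part~(2) your approach also coincides with the paper's in setting up the system $-m+\theta n\alpha=m/q$, $m-(1-\theta)n\alpha=m/p$ and solving for $\alpha=\tfrac{m}{n}\bigl(2-(\tfrac1p-\tfrac1q)\bigr)$ and $\theta=\tfrac{1+1/q}{2-1/p+1/q}$. The verification that $\theta\in[0,1]$ and that $\alpha>\tfrac1p-\tfrac1q\iff\tfrac1p-\tfrac1q<\tfrac{2m}{n+m}$ is correct.

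However, there is a genuine gap in your treatment of the upper bound $\alpha\le\min\{\tfrac{2m}{n},1\}$. The inequality $\alpha\le\tfrac{2m}{n}$ is equivalent to $\tfrac1p-\tfrac1q\ge 0$, i.e.\ to $p\le q$, not to the hypothesis as you write. More importantly, when $2m>n$ the binding constraint is $\alpha\le 1$, and this is equivalent to $\tfrac1p-\tfrac1q\ge\tfrac{2m-n}{m}$, which is \emph{not} a consequence of $\tfrac1p-\tfrac1q<\tfrac{2m}{n+m}$. For instance, if $2m>n$ and $p=q$ then $\alpha=\tfrac{2m}{n}>1$ and your choice of $\alpha$ falls outside the admissible window of Lemma~\ref{propDS}, so the second bullet cannot be invoked. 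Your parenthetical ``argues similarly'' does not go through here.

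The paper closes this gap by treating the regime $2m>n$ and $\tfrac1p-\tfrac1q<\tfrac{2m-n}{m}$ separately, returning to the \emph{first} bullet of Lemma~\ref{propDS} (the $L^1\to L^\infty$ estimate, available precisely because $2m>n$). One then uses that $d$ is bounded to interpolate: choosing $\tilde\theta\in[0,1]$ with $1-\tfrac{m}{n}+\tfrac{m}{np}\le\tilde\theta\le\tfrac{m}{n}+\tfrac{m}{nq}$ (such $\tilde\theta$ exists exactly when $\tfrac1p-\tfrac1q\le\tfrac{2m-n}{m}$) gives $\|u\|_{L^q_{d^m}}\le\|u\,d^{-m+\tilde\theta n}\|_{L^\infty}$ and $\|f\,d^{m-(1-\tilde\theta)n}\|_{L^1}\le C\|f\|_{L^p_{d^m}}$, which combined with the lemma yields the estimate. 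You should add this second case to complete the argument.
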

\begin{proof}
From Lemma \ref{propDS} we have that, for $2m>n$ and $\theta\in [0,1]$,

\begin{equation}
\label{16ds}
\|u\, d^{-m+\theta n}\|_{L^\infty(\O)}\leq C\, \| f\,
d^{m-(1-\theta)n}\|_{L^1(\O)}.
\end{equation}

Then taking $\theta=1$ and using that $-m+n<0$ and $d(x)\leq
\diam(\Omega)$ we obtain
$$
\|u\|_{L^\infty(\O)} \leq C\, \|u\, d^{-m+n}\|_{L^\infty(\O)}\leq C\, \| f\,
d^{m}\|_{L^1(\O)}
$$
and so $(1)$ is proved.

On the other hand, using again Lemma \ref{propDS}, we have that, if there exists
$\alpha\in(\frac1p-\frac1q, \min \{1, \frac{2m}{n}\}]$ and
$\theta\in
[0,1]$ such that
\begin{eqnarray}
\label{potenciam}
\left\{\begin{array}{ccc}
-m+\theta\, n\, \alpha=\frac{m}{q}\\
m-(1-\theta)\, n\, \alpha=\frac{m}{p}
\end{array}\right.
\end{eqnarray}
 we obtain
$$\|u\|_{L^q_{d^m}(\O)}\leq C\, \|f\|_{L^p_{d^{m}}(\O)}$$
for $\frac1p-\frac1q<\min \{1, \frac{2m}{n}\}$.

Solving system (\ref{potenciam}) we obtain
$$
\alpha=(2+\frac{1}{q}-\frac{1}{p})\frac{m}{n}\ \mbox{\ and\ }
\ \theta=(\frac1q+1)\, (2-\frac1p+\frac1q)^{-1}.
$$

We are going to show that $\alpha$ and
$\theta$ satisfy the required conditions
if $\frac{2m-n}{m}\le \frac{1}{p}-\frac{1}{q}< \frac{2m}{n+m}$.

Since $1\le p$ we have $\theta\in [0,1]$. On the other
hand, from the definition of $\alpha$, it is easy to see
that the condition
$\frac{1}{p}-\frac{1}{q}<\alpha$ is equivalent
to $\frac{1}{p}-\frac{1}{q}< \frac{2m}{n+m}$, which is one of our hypothesis.

Finally we have to see that $\alpha\leq \min \{1, \frac{2m}{n}\}$.
Since $p\le q$ we have $\alpha\le\frac{2m}{n}$. Therefore, it only remains
to consider the case $\frac{2m}{n}> 1$. But $\alpha\le 1$ is
equivalent to $\frac{2m-n}{m}\le \frac{1}{p}-\frac{1}{q}$ and so
the proposition is proved under this restriction.

Suppose now that $\frac{1}{p}-\frac{1}{q} < \frac{2m-n}{m}$.
In this case, for $2m>n$, using again the first part of Lemma \ref{propDS},
for all $\tt\in [0,1]$ we have
$$
\|u\, d^{-m+\tt n}\|_{L^\infty(\O)}
\le C\, \|f\, d^{m-(1-\tt) n}\|_{L^1(\O)}.
$$
Moreover, if $\tt\le\frac{m}{nq}+\frac{m}{n}$, it follows that
$$
\|u\|_{L^q_{d^m}(\O)}\le \|u\, d^{-m+\tt n}\|_{L^\infty(\O)}.
$$
Analogously, if $1-\frac{m}{n}+\frac{m}{np}\le\tt$,
$$
 \|f\, d^{m-(1-\tt) n}\|_{L^1(\O)}
 \le C
 \|f\|_{L^p_{d^{m}}(\O)}.
$$
Therefore, if we can choose $\tt$ satisfying
$1-\frac{m}{n}+\frac{m}{np}\leq
\tt\leq \frac{m}{nq}+\frac{m}{n}$ we have
$$
\|u\|_{L^q_{d^m}(\O)}\leq C\, \|f\|_{L^p_{d^{m}}(\O)},
$$
but, such a $\tt$ exists because
$\frac{1}{p}-\frac{1}{q} \leq \frac{2m-n}{m}$
and the proposition is proved.
\hspace*{\fill} $\square$
\end{proof}

\begin{rem}
\label{remarkcita}
The condition in (2) is almost optimal, i.e., if $\frac{1}{p}-\frac{1}{q}> \frac{2m}{n+m}$ then
the a priori estimate does not hold in general. We postpone the proof of this
observation to the end of the paper because we will use the same technique as in the
proof of our second main theorem.
\end{rem}
In the proof of the following proposition we will denote with
$\lambda_{1,m}$ the first eigenvalue of the operator
$(-\Delta)^m$ and with $\phi_{1,m}>0$ a corresponding eigenfunction
normalized by $\int_\O \phi_{1,m}=1$.
We will use that there exist two positive constants $c_1$ and $c_2$ such that, in $\O$,
\begin{equation}
\label{cotasautofuncion}
c_1\, d^m\leq \phi_{1,m}
\le c_2\, d^m ,
\end{equation}
see \cite{CS}.

\begin{pro}
\label{proposition2.3}
Let $1\leq k < \frac{n+m}{n-m}$.
If $u$ is a 
solution of (\ref{2.1}) with
$f\in L^1_{d^m}(\Omega)$ and $f>0$, then there exists $C>0$ such that
\begin{equation}\label{2.3}
\|u\|_{L^k_{d^m}(\O)}\leq C\, \|u\|_{L^1_{d^m}(\O).}
\end{equation}
\end{pro}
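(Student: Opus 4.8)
The plan is to derive \eqref{2.3} from Proposition \ref{pro2.1}(2) together with a bootstrap argument that exploits the positivity hypothesis $f>0$. The first observation is that $f>0$ lets us compare $f$ with the first eigenfunction: multiplying the equation $(-\Delta)^m u = f$ by $\phi_{1,m}$ and integrating by parts (using the boundary conditions and self-adjointness of $(-\Delta)^m$) gives $\lambda_{1,m}\int_\O u\,\phi_{1,m} = \int_\O f\,\phi_{1,m}$. Combining this identity with the two-sided bound \eqref{cotasautofuncion} yields $\|f\|_{L^1_{d^m}(\O)} \le C\,\|u\|_{L^1_{d^m}(\O)}$, i.e. the $L^1_{d^m}$ norm of the data is controlled by that of the solution. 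So it suffices to prove $\|u\|_{L^k_{d^m}(\O)}\le C\,\|f\|_{L^1_{d^m}(\O)}$.

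For the latter I would apply Proposition \ref{pro2.1}(2) with $p=1$ and $q=k$, which requires $1-\frac1k < \frac{2m}{n+m}$. A short computation shows this inequality is exactly equivalent to $k < \frac{n+m}{n-m}$ when $n>m$ (and when $n\le m$ the bound is immediate from Proposition \ref{pro2.1}(1), since then $u\in L^\infty(\O)$ and $d^m$ is integrable on the bounded domain $\O$). Thus in the range $1\le k<\frac{n+m}{n-m}$ we directly get $\|u\|_{L^k_{d^m}(\O)}\le C\,\|f\|_{L^1_{d^m}(\O)}$, and chaining this with the eigenfunction estimate above finishes the proof.

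I expect the only genuinely delicate point to be the justification of the integration-by-parts identity $\lambda_{1,m}\int_\O u\phi_{1,m}=\int_\O f\phi_{1,m}$ at the level of weak solutions with merely $f\in L^1_{d^m}(\O)$: one must check that $u\phi_{1,m}$ and $f\phi_{1,m}$ are integrable (which follows from \eqref{cotasautofuncion} and $u,f\in L^1_{d^m}(\O)$, the membership $u\in L^1_{d^m}$ coming from Proposition \ref{pro2.1}(2) with $q=1$) and that the weak formulation may legitimately be tested against $\phi_{1,m}$, i.e. that $\phi_{1,m}$ lies in (or can be approximated within) the relevant test space. This is a routine density/regularity argument but is the step that carries the analytic content; the rest is bookkeeping with the exponents. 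An alternative that avoids the eigenfunction entirely is to note that positivity of the Green function $G$ gives $u(x)=\int_\O G(x,y)f(y)\,dy \ge 0$, and then a direct kernel estimate on $G$ combined with Jensen/Hölder could be used, but the eigenfunction route is cleaner and already supplies the sharp constant $\frac{n+m}{n-m}$.
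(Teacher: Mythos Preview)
Your proposal is correct and follows essentially the same route as the paper: apply Proposition~\ref{pro2.1}(2) with $p=1$ to get $\|u\|_{L^k_{d^m}}\le C\|f\|_{L^1_{d^m}}$, and then use the eigenfunction comparison \eqref{cotasautofuncion} together with the identity $\int_\O f\,\phi_{1,m}=\lambda_{1,m}\int_\O u\,\phi_{1,m}$ (and the positivity of $f$, hence of $u$) to bound $\|f\|_{L^1_{d^m}}$ by $C\|u\|_{L^1_{d^m}}$. The paper presents these two steps in the reverse order and is terser about the justification of the integration by parts, but the argument is the same.
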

\begin{proof}
Taking $p=1$  in the previous proposition we obtain for $1\leq k<
\frac{n+m}{n-m}$
$$\|u\|_{L^k_{d^m}(\O)}\leq C\, \|f\|_{L^1_{d^m}(\O)}.$$
Using integration by
parts and that $f>0$ we have
\begin{align*}
 \|f\|_{L^1_{d^m}}&= \int_\Omega (-\Delta)^m u\, d^m\, dx \le \int_\Omega (-\Delta)^m u\, \phi_{1,m}\, dx
  \\
 &\leq C\,  \int_\Omega u\, (-\Delta)^m \phi_{1,m}\, dx = C\, \lambda_{1,m}  \int_\Omega u\, \phi_{1,m}\, dx
 \\
 &\leq C\, \int_\Omega u\, d^m\,
 dx \leq C\, \|u\|_{L^1_{d^m}}.
  \end{align*}
\hspace*{\fill} $\square$
\end{proof}

\section{Main results}

We consider problem (\ref{1.1}) and define the exponents
$$
\alpha= \frac{2m(p+1)}{pq-1}\ \mbox{     and     }\  \beta=\frac{2m(q+1)}{pq-1}.
$$

Then, the natural extension of the results in \cite{PS} is given by the following

\begin{theorem}
\label{m}
If
\begin{equation}
\label{1.3}
\max(\alpha,\beta)>n-m,
\end{equation}
then, any non-negative solution of (\ref{1.1}) satisfies
\begin{equation}
\|u\|_{L^\infty(\O)} ,\, \|v\|_{L^\infty(\O)}\leq C
\end{equation}
where $C$ is a positive constant which depends only on $a$, $b$, $p$, $q$, $m$, and $\O$.
\end{theorem}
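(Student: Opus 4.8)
The plan is to follow the strategy of Souplet \cite{PS}, replacing the weighted $L^1$–$L^q$ estimates for $-\Delta$ by the ones we proved in Proposition \ref{pro2.1} and Proposition \ref{proposition2.3} for $(-\Delta)^m$. The two exponents $\alpha,\beta$ are built precisely so that, if we postulate an a priori bound of the form $\|u\|_{L^\infty(d^m\,dx\text{-scaled})}\sim R^{-\alpha}$ and $\|v\|\sim R^{-\beta}$ on the scale $R=\|u\|_\infty^{-1/\alpha}$ (or $\|v\|_\infty^{-1/\beta}$), the system is self-consistent. Concretely, I would argue by contradiction: suppose there is a sequence of non-negative solutions $(u_k,v_k)$ with $M_k:=\|u_k\|_{L^\infty}+\|v_k\|_{L^\infty}\to\infty$.

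First I would establish the basic \emph{bootstrap/regularity step}: from Proposition \ref{pro2.1}(2), applied alternately to the two equations of (\ref{1.1}), one converts an $L^p_{d^m}$ bound on $u$ (resp. $v$) into an $L^q_{d^m}$ bound on $v^p$ (resp. $u^q$), hence on $v$ (resp. $u$), as long as the Sobolev-type gap condition $\frac1p-\frac1q<\frac{2m}{n+m}$ can be met at each step. Iterating this finitely many times — the number of steps controlled by how $p,q$ sit relative to the critical hyperbola and by the hypothesis (\ref{1.3}) — one upgrades any solution in $L^1_{d^m}$ to $L^\infty(\O)$, with the $L^\infty$ norm controlled by $\|u\|_{L^1_{d^m}}+\|v\|_{L^1_{d^m}}$ (together with $a,b,p,q,m,\O$). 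When $n\le m$ this is immediate from part (1). The point of this step is that it suffices to bound the $L^1_{d^m}$ norms. Here one uses Proposition \ref{proposition2.3}: since the right-hand sides $a(x)v^p, b(x)u^q$ are non-negative, a solution in $L^1_{d^m}$ automatically lies in $L^k_{d^m}$ for $1\le k<\frac{n+m}{n-m}$ with the norm controlled by the $L^1_{d^m}$ norm, which gives room to start the bootstrap even when the source is only in $L^1_{d^m}$.

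Next comes the \emph{rescaling / contradiction step}, which I expect to be the main obstacle. Pick points $x_k$ where (say) $u_k$ is comparable to its max, set $\lambda_k\to 0$ by $\lambda_k^{\alpha}=\|u_k\|_{L^\infty}^{-1}$ (symmetric treatment if $v_k$ dominates), and rescale
$$
\tilde u_k(y)=\lambda_k^{\alpha}u_k(x_k+\lambda_k y),\qquad \tilde v_k(y)=\lambda_k^{\beta}v_k(x_k+\lambda_k y),
$$
so that $(\tilde u_k,\tilde v_k)$ solves $(-\Delta)^m\tilde u_k=\tilde a_k\,\tilde v_k^{p}$, $(-\Delta)^m\tilde v_k=\tilde b_k\,\tilde u_k^{q}$ on dilated domains $\O_k$, with $\tilde a_k,\tilde b_k$ bounded. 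The dilated domains either invade all of $\R^n$ (interior blow-up) or converge to a half-space (boundary blow-up); the weighted norms with weight $d^m$ are exactly the scale-invariant quantities for this family, which is why the estimates were set up with that weight. Using the a priori bounds of the previous step (now applied on large balls, uniformly) together with interior elliptic estimates for $(-\Delta)^m$, one extracts a limit $(\tilde u,\tilde v)\not\equiv 0$ that is a non-negative bounded solution of the limiting system on $\R^n$ or on a half-space with the Dirichlet (Navier-type) conditions. A Liouville-type theorem — in the whole-space case the condition $\max(\alpha,\beta)>n-m$ is what forces non-existence of non-trivial bounded non-negative solutions, via the $L^1_{d^m}$ (or, in $\R^n$, an integral/test-function) argument of Proposition \ref{proposition2.3} applied with the first eigenfunction replaced by a suitable cutoff — then yields $\tilde u\equiv\tilde v\equiv 0$, contradicting normalization. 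The boundary-blow-up case is handled by the same Liouville statement on a half-space, using the positivity of the Green function (hence the maximum principle for $(-\Delta)^m$) that we have assumed on $\O$; this is precisely where the geometric restriction on $\O$ enters.

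The delicate points, and where I would spend most of the effort, are: (i) checking that the gap condition $\frac1p-\frac1q<\frac{2m}{n+m}$ (and its dual) can indeed be satisfied at every bootstrap step under hypothesis (\ref{1.3}) and $pq>1$, possibly after interpolating with Proposition \ref{proposition2.3} to gain the initial integrability; (ii) verifying that the rescaled problems enjoy \emph{uniform} weighted bounds so that the limit solution is genuinely bounded and non-trivial — this requires that the a priori constant in the bootstrap depends only on $p,q,m,\O$ and on $\|\tilde a_k\|_\infty,\|\tilde b_k\|_\infty$, not on $k$; and (iii) the Liouville theorem on $\R^n$ and on the half-space for the polyharmonic system, for which the scaling exponents $\alpha,\beta$ and the inequality $\max(\alpha,\beta)>n-m$ play the decisive role, as in the case $m=1$ treated by Souplet. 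Once these are in place the contradiction is immediate and the theorem follows.
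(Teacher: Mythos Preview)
Your bootstrap step matches the paper: Propositions \ref{pro2.1} and \ref{proposition2.3} are iterated, and a careful exponent chase (the paper's Steps 2--5) shows that under (\ref{1.3}) one can climb from $L^k_{d^m}$, $k<\frac{n+m}{n-m}$, up to $L^\infty$. So it is correct that everything reduces to an a priori bound on $\|u\|_{L^1_{d^m}}+\|v\|_{L^1_{d^m}}$.

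The divergence is in how that starting estimate is obtained. The paper does \emph{not} rescale and does not invoke any Liouville theorem; it proves (\ref{prop4.1}) directly. The key tool is Lemma~\ref{x}: for $h\ge 0$ the solution of (\ref{xavier}) satisfies $v(x)/d^m(x)\ge C\int_\O h\,d^m$. This is the polyharmonic analogue of the Brezis--Cabr\'e lemma that Souplet uses for $m=1$; since the maximum principle on subdomains fails for $m\ge 2$, the paper derives it instead from the pointwise Green-function lower bounds (\ref{g_1})--(\ref{g_3}), and this is exactly where the restriction on $\Omega$ enters. With Lemma~\ref{x} in hand, Souplet's argument for (\ref{prop4.1}) goes through, and $\|u\|_{L^1_{d^m}}+\|v\|_{L^1_{d^m}}\le C$ follows from $\phi_{1,m}\simeq d^m$. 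No limiting problem ever appears.

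Your rescaling route, by contrast, rests on Liouville theorems for the polyharmonic Lane--Emden system on $\R^n$ and on a half-space with full Dirichlet conditions. These are not proved in the paper, are not consequences of Propositions \ref{pro2.1}--\ref{proposition2.3}, and for $m\ge 2$ are genuinely hard; already for $m=1$ the entire point of Souplet's $L^p_\delta$ method is to \emph{bypass} such Liouville statements. Your item (iii) is therefore not a technicality but the whole difficulty, and it is circumvented --- not solved --- by the paper's direct argument. The geometric restriction on $\Omega$ is also used differently from what you suggest: it is needed for the Green-function positivity behind Lemma~\ref{x}, not for a half-space blow-up limit.
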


We also prove, in the following theorem, that condition (\ref{1.3}) is almost optimal.
We cannot say optimal because we do not know what happens in the case $\max(\alpha,\beta)= n-m$.

\begin{theorem}
\label{optima}
If
\begin{equation}\label{1.5}
 \max(\alpha,\beta)< n-m,
 \end{equation}
then, there exist nonnegative bounded functions $a$ and $b$, such that (\ref{1.1})
have some non-negative solution $(u,v)$, with $u$ and $v$ unbounded functions.
\end{theorem}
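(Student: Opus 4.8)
The plan is to realize the unboundedness by superposing Green potentials for $(-\Delta)^m$ concentrated near a single boundary point. Fix $x_0\in\partial\O$ and let $G$ be the Green function of $(-\Delta)^m$ on $\O$ with the conditions $(\partial/\partial\nu)^j u=0$, $0\le j\le m-1$; since $\O$ is a ball (or an admissible perturbation when $n=2$) one has $G>0$, and this positivity is the only place where the restriction on $\O$ is used. Choose points $x_k\in\O$ with $x_k\to x_0$ and $\delta_k:=d(x_k)=\delta_0\,2^{-k}$, and radii $r_k=c\,\delta_k$ where $c$ is so small that the balls $B_k:=B(x_k,r_k)$ are pairwise disjoint, contained in $\O$, and at distance comparable to $\delta_k$ from $\partial\O$. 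Fix a bump $0\le\phi\le1$ supported in the unit ball with $\phi\equiv1$ on the ball of radius $1/2$, put $\phi_k(x):=\phi((x-x_k)/r_k)$, and set
$$u:=\sum_{k\ge1}\lambda_k\,(G*\phi_k),\qquad v:=\sum_{k\ge1}\mu_k\,(G*\phi_k),$$
with amplitudes $\lambda_k,\mu_k>0$ to be chosen. Since $G>0$, every summand is nonnegative and satisfies the boundary conditions, so $u,v\ge0$; and setting $a:=(-\Delta)^m u/v^p=\big(\sum_k\lambda_k\phi_k\big)/v^p$ and $b:=(-\Delta)^m v/u^q=\big(\sum_k\mu_k\phi_k\big)/u^q$, both $\ge0$, the pair $(u,v)$ solves (\ref{1.1}) in the weak sense term by term (using that $G*\phi_k$ solves $(-\Delta)^m(G*\phi_k)=\phi_k$ with the boundary conditions). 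It then remains to choose the amplitudes so that $a,b\in L^\infty(\O)$, $u,v\in L^1(\O)$, and $\sum_k\lambda_k\phi_k,\sum_k\mu_k\phi_k\in L^1_{d^m}(\O)$; these make $(u,v)$ an admissible weak solution, and then $a\,v^p$, $b\,u^q$ coincide a.e.\ with the $L^1_{d^m}$ functions $\sum_k\lambda_k\phi_k$, $\sum_k\mu_k\phi_k$.

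On $B_k$, the interior lower bound for $G$ gives $G*\phi_k\gtrsim r_k^{2m}\sim\delta_k^{2m}$, so $v\ge\mu_k\,(G*\phi_k)\gtrsim\mu_k\delta_k^{2m}$ on $B_k$; moreover $d\sim\delta_k$ there, and by disjointness $\sum_l\lambda_l\phi_l$ equals $\lambda_k\phi_k\le\lambda_k$ on $B_k$ and vanishes off $\bigcup_k B_k$. Hence $\|a\|_{L^\infty(\O)}\lesssim\sup_k\lambda_k(\mu_k\delta_k^{2m})^{-p}$ and, symmetrically, $\|b\|_{L^\infty(\O)}\lesssim\sup_k\mu_k(\lambda_k\delta_k^{2m})^{-q}$; so for $a,b\in L^\infty(\O)$ it is enough that $\lambda_k\lesssim\mu_k^p\delta_k^{2mp}$ and $\mu_k\lesssim\lambda_k^q\delta_k^{2mq}$ for all $k$. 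Because $pq>1$, iterating these two bounds yields $\lambda_k\gtrsim\delta_k^{-\alpha-2m}$ and $\mu_k\gtrsim\delta_k^{-\beta-2m}$ (here $\beta p=\alpha+2m$ and $\alpha q=\beta+2m$ from the definitions of $\alpha,\beta$), so we take the minimal admissible amplitudes $\lambda_k:=\delta_k^{-\alpha-2m}$, $\mu_k:=\delta_k^{-\beta-2m}$, which do satisfy the two upper bounds. Then $u\gtrsim\delta_k^{-\alpha}$ on $B_k$ and $v\gtrsim\delta_k^{-\beta}$ on $B_k$ for every $k$, whence $\|u\|_{L^\infty(\O)}=\|v\|_{L^\infty(\O)}=\infty$. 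Finally, using $d\sim\delta_k$ and $r_k\sim\delta_k$ on $B_k$, one gets $\|\lambda_k\phi_k\|_{L^1_{d^m}(\O)}\sim\lambda_k\,\delta_k^{m}r_k^n\sim\delta_k^{\,n-m-\alpha}$ and
$$\|\lambda_k\,(G*\phi_k)\|_{L^1(\O)}=\lambda_k\int_{B_k}\phi_k\,w\sim\lambda_k\,\delta_k^{m}r_k^n\sim\delta_k^{\,n-m-\alpha},$$
where $w(y):=\int_\O G(x,y)\,dx$ solves $(-\Delta)^m w=1$ with the Dirichlet conditions and obeys $w\sim d^m$, a standard consequence of Boggio-type estimates for $G$ on the ball (compare (\ref{cotasautofuncion}) and \cite{CS,D-S}). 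Since $\alpha,\beta<n-m$ by (\ref{1.5}) and $\delta_k\to0$ geometrically, all these series---and the analogues for $v$, which carry the exponent $n-m-\beta$---converge.

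Collecting this, $(u,v)$ is a nonnegative weak solution of (\ref{1.1}) for bounded nonnegative $a,b$, with $u$ and $v$ unbounded, which proves Theorem~\ref{optima}. This is the device announced in Remark~\ref{remarkcita}: the near-optimality of Proposition~\ref{pro2.1}(2) follows by using the very same bumps $\lambda_k\phi_k$ concentrating at $\partial\O$, with amplitudes tuned so that $\sum_k\|\lambda_k\phi_k\|_{L^p_{d^m}}^p<\infty$ while $\sum_k\|\lambda_k\,(G*\phi_k)\|_{L^q_{d^m}}^q=\infty$, which is possible exactly when $\frac1p-\frac1q>\frac{2m}{n+m}$. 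Two features carry the argument. First, \emph{positivity}: a superposition of explicit singular profiles would make $(-\Delta)^m$ of a bump change sign and destroy the sign of $a$ and $b$, so one must pass through the positive Green function, which is precisely why $\O$ has to be a ball. Second, the \emph{sharp exponent}: the threshold is $n-m$, not the $n-2m$ one would obtain from bumps accumulating at an interior point, the extra $m$ arising because the weight $d^m$ has size $\delta_k^m$ on the boundary balls $B_k$---equivalently because the torsion function $w\sim d^m$ enters $\|G*\phi_k\|_{L^1}$. The remaining verifications---uniformity in $k$ of the interior lower bound for $G$ on $\{B_k\}$, the disjointness of the $B_k$, and the estimate $w\sim d^m$---are routine; the one delicate point is checking that every summability condition bottoms out at exactly the hypothesis $\max(\alpha,\beta)<n-m$, which is what pins down the borderline amplitudes $\lambda_k\sim\delta_k^{-\alpha-2m}$, $\mu_k\sim\delta_k^{-\beta-2m}$.
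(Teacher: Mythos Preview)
Your argument is correct and complete, but it takes a genuinely different route from the paper's. The paper works with a \emph{continuous} singular source supported in a cone $\Sigma$ with vertex at $x_0\in\partial\O$: it sets $\phi(x)=|x-x_0|^{-(\alpha+2m)}\chi_\Sigma$, $\psi(x)=|x-x_0|^{-(\beta+2m)}\chi_\Sigma$, checks directly that $\phi,\psi\in L^1_{d^m}(\O)$ when $\alpha,\beta<n-m$, and then uses the pointwise Green-function lower bounds (\ref{g_1})--(\ref{g_3}) on the cone to obtain $u(x)\ge C|x-x_0|^{-\alpha}\chi_\Sigma$ and $v(x)\ge C|x-x_0|^{-\beta}\chi_\Sigma$; the identities $\beta p=\alpha+2m$ and $\alpha q=\beta+2m$ then give $v^p\ge C\phi$ and $u^q\ge C\psi$, so $a=\phi/v^p$ and $b=\psi/u^q$ are bounded. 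Your construction is the \emph{discrete} analogue: a dyadic superposition of smooth bumps on balls $B_k$ accumulating at $x_0$, with amplitudes $\lambda_k=\delta_k^{-\alpha-2m}$, $\mu_k=\delta_k^{-\beta-2m}$ tuned via the same two identities. The paper's version is slightly shorter because a single explicit profile replaces a series and its convergence checks, and the cone geometry makes the lower bound $d(y)\ge C|y-x_0|$ immediate. Your version, on the other hand, keeps each building block smooth and compactly supported, which makes the sign and support of $a,b$ transparent and isolates exactly where the summability threshold $n-m$ enters (via $\sum_k\delta_k^{\,n-m-\alpha}$). Both approaches rely on the same two ingredients you correctly highlight: positivity of $G_m$ (forcing the restriction on $\O$) and the boundary weight $d^m$ that shifts the critical exponent from $n-2m$ to $n-m$.
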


Once we have the results of the previous section, the proofs of both theorems
follows the lines of the case $m=1$ proved in \cite{PS}. A key point in
the arguments given in that paper are the estimates
\begin{equation}
\label{prop4.1}
\int _\Omega u\, \phi_{1,m}\, , \, \int_\Omega v\, \phi_{1,m}\le C.
\end{equation}
A straightforward extension of the arguments given in \cite{PS2}, to prove these estimates
in the case $m=1$, is not possible. Indeed, the proof given in that paper is based
on a lemma of \cite{BC} which uses the maximum principle in subsets of $\O$.
An analogous maximum principle is not valid in the case $m\ge 2$.
We will give a different proof of this lemma using pointwise estimates for the
Green function $G_m$ of problem (\ref{2.1}) given below. This is why we have to restrict $\Omega$ in order to
have that the Green function is positive, i.e., we assume
that $\Omega=B=\{x\in\R^n\,:\,|x|< 1\}$ when $n\geq 3$,
and $\O=B$ or some perturbations of $B$ for the case $n=2$ (see \cite{DS2} for details of this perturbation).
We have:
for $ 2m<n$,
\begin{equation}\label{g_1}
 G_m(x,y)\geq C\, |x-y|^{2m-n}\, \min\left\{1,\, \frac{ d(x)^m\, d(y)^m}{|x-y|^{2m}}\right\},
 \end{equation}
for $ 2m=n$,
\begin{equation}\label{g_2}
 G_m(x,y)\geq C\, \log\left(1 + \frac{ d(x)^m\, d(y)^m}{|x-y|^{2m}}\right)
 \geq C\,
\log\left(2 + \frac{ d(y)}{|x-y|}\right)\min\left\{1 , \frac{
d(x)^m\, d(y)^m}{|x-y|^{2m}}\right\},
 \end{equation}
and for $ 2m>n$,
\begin{equation}\label{g_3}
G_m(x,y)\geq C\, d(x)^{m-n/2}\, d(y)^{m-n/2}\, \min\left\{1,\, \frac{ d(x)^{n/2}\, d(y)^{n/2}}{|x-y|^{n}}\right\}.
 \end{equation}

The proofs of these estimates can be found in \cite{DS2} for the case
of $m=n =2$ and in \cite{GS} for the rest of the cases.

\begin{lem}
\label{x}
Assume $h\geq 0$, $h\in L^1_{d^m}(\Omega)$ and $v$ a solution
of
\begin{eqnarray}\label{xavier}
\left\{\begin{array}{ccc}
(-\Delta)^m v=h &\mbox{ in }\Omega\\
\left(\frac{\partial}{\partial \nu}\right)^{j}v=0 &\mbox{ on
}\partial\Omega & 0\leq j\leq m-1.\end{array}\right.
\end{eqnarray}
Then there exists $C>0$, depending only on $\O$ and $m$, such that for all $x\in \Omega$
\begin{equation}
\frac{v(x)}{d^m(x)}\geq C\, \int_\Omega h\, d^m.
\end{equation}
\end{lem}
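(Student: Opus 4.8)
The plan is to represent the solution $v$ via the Green function and then bound the Green kernel from below by a product that separates the variables $x$ and $y$. Since $v$ solves \eqref{xavier} with right-hand side $h\ge 0$, we have the integral representation
$$
v(x)=\int_\Omega G_m(x,y)\,h(y)\,dy ,
$$
so that
$$
\frac{v(x)}{d^m(x)}=\int_\Omega \frac{G_m(x,y)}{d^m(x)}\,h(y)\,dy .
$$
The goal is therefore to show that there is a constant $C>0$, depending only on $\Omega$ and $m$, such that $G_m(x,y)\ge C\,d^m(x)\,d^m(y)$ for all $x,y\in\Omega$; plugging this in immediately yields $v(x)/d^m(x)\ge C\int_\Omega h\,d^m$.

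To get this pointwise lower bound I would split into the three regimes $2m<n$, $2m=n$, $2m>n$ and use the estimates \eqref{g_1}, \eqref{g_2}, \eqref{g_3} respectively. In each case the right-hand side contains a $\min\{1,\text{(ratio)}\}$. When the ratio is $\le 1$ the $\min$ equals the ratio, and the prefactor times the ratio simplifies precisely to (a constant times) $d^m(x)\,d^m(y)$: e.g.\ for $2m<n$, $|x-y|^{2m-n}\cdot \frac{d(x)^m d(y)^m}{|x-y|^{2m}}=d(x)^m d(y)^m|x-y|^{-n}\ge d(x)^m d(y)^m\,(\diam\Omega)^{-n}$; similarly for $2m>n$, $d(x)^{m-n/2}d(y)^{m-n/2}\cdot\frac{d(x)^{n/2}d(y)^{n/2}}{|x-y|^n}=d(x)^m d(y)^m|x-y|^{-n}$, and for $2m=n$ one uses $\log(1+t)\ge c\,\min\{1,t\}\ge c\,t$ for $t\le1$. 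When the ratio is $\ge 1$ the $\min$ equals $1$; then one uses $|x-y|\le\diam\Omega$ to bound the prefactor below by a constant, and separately $d(x),d(y)\le\diam\Omega$ to see that the constant dominates $C'\,d^m(x)\,d^m(y)$ after adjusting constants. In both sub-cases one obtains $G_m(x,y)\ge C\,d^m(x)\,d^m(y)$ with $C=C(\Omega,m)$.

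The only slightly delicate point is keeping track of which quantity is being bounded by a constant and in which direction the inequality goes, especially in the "ratio $\ge 1$" branch where the bound $G_m\ge C d^m(x)d^m(y)$ is crude but still correct because $d^m(x)d^m(y)$ is itself bounded on $\Omega$; there is nothing deep there, just careful bookkeeping with $\diam\Omega$. I expect the main (minor) obstacle to be the borderline case $2m=n$, where one must first pass from the logarithmic expression in \eqref{g_2} to a $\min\{1,\cdot\}$ form using $\log(1+t)\ge c\min\{1,t\}$, and then argue exactly as in the other cases. Finally, assembling the three cases gives the claimed inequality, and the representation formula for $v$ completes the proof.
\hspace*{\fill} $\square$
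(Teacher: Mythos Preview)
Your proposal is correct and follows essentially the same approach as the paper: reduce via the Green representation to the pointwise bound $G_m(x,y)\ge C\,d^m(x)\,d^m(y)$, and verify this by splitting on the value of the $\min$ in \eqref{g_1}--\eqref{g_3}. The only cosmetic difference is that in the ``ratio $\ge 1$'' branch for $2m<n$ the paper first passes through $|x-y|^{2m-n}\ge d(x)^{m-n/2}d(y)^{m-n/2}$ (using $|x-y|^{2m}\le d(x)^m d(y)^m$) before invoking boundedness of $d$, whereas you bound $|x-y|^{2m-n}$ directly by a constant; both are fine.
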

\begin{proof}
By the representation formula
$$v(x)=\int_\Omega G_m(x,y)\, h(y)\, dy$$
it is enough to prove that
$$G_m(x,y)\geq C\, d(x)^m\, d(y)^m.$$

Consider, for example, the case $2m<n$ and suppose that
$\frac{ d(x)^m\, d(y)^m}{|x-y|^{2m}}\ge 1$. Then, it follows from
(\ref{g_1}), that
$$
G_m(x,y)\ge C\, |x-y|^{2m-n}\ge d(x)^{m-n/2} d(y)^{m-n/2}
\ge C d(x)^{m} d(y)^{m}
$$
where in the last step we have used that $\O$ is bounded.
On the other hand, if the minimum on the right hand side
of (\ref{g_1}) is attained in $\frac{ d(x)^m\, d(y)^m}{|x-y|^{2m}}$
we have
$$
G_m(x,y)\ge C\, |x-y|^{-n}d(x)^{m} d(y)^{m}
\ge C d(x)^{m} d(y)^{m}.
$$
The proofs for the cases $2m=n$ and $2m>n$ are analogous, using now (\ref{g_2})
and (\ref{g_3}) respectively.
\hspace*{\fill} $\square$
\end{proof}

\bigskip
{\bf Proof of Theorem \ref{m}:}

\noindent Step 1: From (\ref{prop4.1}) and (\ref{cotasautofuncion}) it follows
immediately,

\begin{equation}
\|u\|_{L^1_{d^m}}+ \|v\|_{L^1_{d^m}}\leq C,
\end{equation}
and therefore, for $n\leq m$,
\begin{equation*}
 \|u\|_{L^\infty(\O)} ,\, \|v\|_{L^\infty(\O)}\leq C
\end{equation*}
is an immediate consequence of $(1)$ in Proposition
\ref{pro2.1}.

On the other hand, if $ n>m$, it follows from
Proposition \ref{proposition2.3}, that
\begin{equation}\label{k}
\|u\|_{L^k_{d^m}}+ \|v\|_{L^k_{d^m}}\leq C(k)
\end{equation}
for $1\leq k< \frac{n+m}{n-m}.$

Clearly we may assume $q\geq p$ and $\beta>n-m$. Then, it is easy to check
that $p<\frac{n+m}{n-m}$ and so, there exists
some $k$ such that
\begin{equation}\label{3.6}
 k\geq p\ \  \mbox{ and }\ \  k\geq \frac{n+m}{n-m}-\epsilon,
  \end{equation}
with $\epsilon$ to be chosen below, for which (\ref{k}) holds.

\noindent Step 2: Assume now that we can choose $k_1\in (k,\, \infty]$ such that
\begin{equation}
\label{3.7}
\frac{1}{k_1}>\frac{p}{k}-\frac{2m}{n+m}.
\end{equation}
Then, using Proposition \ref{pro2.1} we have
\begin{equation}
\label{3.8}
\|u\|_{L^{k_1}_{d^m}}\leq C\, \|(-\Delta)^m u\|_{L^{k/p}_{d^m}}\leq C\,
\|v^p\|_{L^{k/p}_{d^m}}=C\, \|v\|^p_{L^{k}_{d^m}},
\end{equation}
which is finite because $1\leq k<\frac{n+m}{n-m}$.

Observe that, if $k>\frac{(n+m) pq}{2m(q+1)}$, we can take
$k_1>\frac{(n+m)q}{2m}$ satisfying
(\ref{3.7}).

%
%

\noindent Step 3: Assume
 \begin{equation}\label{3.11} k_1>q
\end{equation}
and let $k_2\in (k_1,\, \infty]$ be such that
\begin{equation}
\label{3.12}
\frac{1}{k_2}>\frac{q}{k_1}-\frac{2m}{n+m}.
\end{equation}
From Proposition \ref{pro2.1} we have
\begin{equation*}
\|v\|_{L^{k_2}_{d^m}}\leq C\, \|(-\Delta)^m v\|_{L^{{k_1}/q}_{d^m}}\leq C\,
\|u^q\|_{L^{{k_1}/q}_{d^m}}=C\, \|u\|^q_{L^{k_1}_{d^m}}
\end{equation*}
which is finite by step 2.

\noindent Step 4: We can see that conditions
 (\ref{3.7}), (\ref{3.11}), (\ref{3.12}) and
 $\min\{k_1,k_2\}>\frac{k}{\rho}$ for $\rho \in (0,1)$,
 to be chosen below,
are equivalent to
 \begin{equation}\label{3.14}
A:=\frac{p}{k}-\frac{2m}{n+m}<\frac{1}{k_1}<\min\left\{\frac{\rho}{k},\frac{1}{q}\right\}
\end{equation}
and
\begin{equation}\label{3.15}
\frac{q}{k_1}-\frac{2m}{n+m}<\frac{1}{k_2}<\frac{\rho}{k}.
\end{equation}
Observe now that, if
\begin{equation}
\label{3.16}
k\le\frac{(n+m)\, pq}{2m(q+1)},
\end{equation}
we have $A>0$. Therefore
(\ref{3.14}) can be solved for  $k_1\in [1, +\infty)$ and with
$\frac{1}{k_1}$ arbitrarily closed to $A$ whenever
\begin{equation}
\label{3.17}
\frac{p-\rho}{k}<\frac{2m}{n+m}
\end{equation}
and
\begin{equation}\label{3.18}
\frac{p}{k}-\frac{2m}{n+m}<\frac{1}{q}.
\end{equation}
But, (\ref{3.17}) holds if $\rho$ satisfies
 \begin{equation}\label{3.19}
\frac{n-m}{n+m}\, p<\rho<1,
 \end{equation}
and such a $\rho$ exists because $p<\frac{n+m}{n-m}$.

On the other hand, since $\beta=\frac{2m\, (q+1)}{pq-1}>n-m$, we have
$\frac{1}{q} >\frac{p\, (n-m)}{n+m} -\frac{2m}{n+m}$. Then,
since $k<\frac{n-m}{n+m}$ we can choose $\e$ such that
(\ref{3.18}) holds.

Let us now see that condition (\ref{3.15}) can be fulfilled. Indeed, it is enough to see that
all our parameters can be chosen such that
\begin{equation}\label{3.20}
\frac{q}{k_1}-\frac{2m}{n+m}<\frac{\rho}{k}.
\end{equation}

Taking $\frac{1}{k_1}$ in (\ref{3.14}) closed enough to A
we have that (\ref{3.20}) is equivalent to
\begin{equation}\label{3.21}
\rho>1-\eta,
\end{equation}
where $\eta:=\frac{2m}{n+m}\, (q+1)\, k-(pq-1)$.

Indeed, if
$\frac{1}{k_1}$ is closed to $A=\frac{p}{k}- \frac{2m}{n+m}$, then
$\frac{q}{k_1}-\frac{2m}{n+m}$ is closed to
$\frac{qp}{k}-
\frac{2mq}{n+m}-\frac{2m}{n+m}$.

%

Now, $\rho<1$ is equivalent to
\begin{equation}
\label{3.100}
k>\frac{n+m}{\beta},
\end{equation}
but since
$\beta>n-m$ it is possible to take $\e$ small enough in (\ref{3.6})
such that \eqref{3.100} is satisfied.

Finally we can take
$\rho\in (0,1)$ closed enough to one such that que (\ref{3.19}) and (\ref{3.21}) hold.

Step 5: It follows from step 4 that if (\ref{k}) holds for
some $k$ satisfying (\ref{3.6}) and (\ref{3.16}), then (\ref{k})
is true with $k/\rho$ (as a consequence of (\ref{3.14}) and (\ref{3.15})).

Iterating the procedure we can reach, after a finite number of steps,
some value $\bar{k}>
\frac{(n+m) pq}{2m(q+1)}$. Then, it follows from the comment at the end of step 2 that
there exists $\bar{k}_1>\frac{(n+m) q}{2m}\geq
\frac{(n+m) p}{2m}$ such that $\|u\|_{L^{\bar{k}_1}_{d^m}}\leq C$.

Taking now $k_1=\bar{k}_1$, we can take  $k_2=\infty$ in step 3 to conclude that
$\|v\|_{L^\infty(\O)} \leq C$. Analogously, by step 2 we obtain $\|u\|_{L^\infty(\O)} \leq C$. \hspace*{\fill} $\square$

\section{Existence of singular solutions.}

In order to prove Theorem \ref{optima}
we follow the ideas of \cite{PS}. First we will construct a function
$f\in L^1_{d^m}(\O)$ such that the corresponding solution of the linear problem (\ref{2.1})
is not bounded.

Recall that our domain $\O$ is a ball when $n\geq 3$, and smooth
perturbations of a ball in the case $n=2$. In any case, given $x_0\in
\partial\Omega$, there exist
$r>0$ and a revolution cone $\Sigma_1$ with vertex $x_0$ such that
$\Sigma:=\Sigma_1\cap B_{2r}(x_0)\subset \Omega$.
Now, for $0<\alpha<n-m$ we define
$$
f(x)=|x-x_0|^{-(\alpha+2m)}\chi_{\Sigma},
$$
where $\chi_{\Sigma}$ denotes the characteristic function of $\Sigma$.
Then, it is easy to see that $f\in L^1_{d^m}(\Omega)$.

Let  $u>0$  be the solution of (\ref{2.1}) with $f$ as
right-hand side. Then, we have
$$
u(x)=\int_\O G_m(x,y)\, |y-x_0|^{-(\alpha+2m)}\chi_{\Sigma}(y)\, dy.
$$
Using this representation formula
together with the estimates of the Green function
$(\ref{g_1})$, $(\ref{g_2})$ and $(\ref{g_3})$
it is not difficult to see that, for $x\in\O$,

\begin{equation}\label{unoacotada}
u(x) \geq C\, |x-x_0|^{-\alpha}\chi_{\Sigma}(x).
\end{equation}

\  \\{\bf Proof of Theorem \ref{optima}}:
Recall that $\alpha =\frac{2m(p+1)}{pq-1}$
and $\beta=\frac{2m(q+1)}{pq-1}$, and we are assuming $0<\alpha, \beta< n-m$.
We define
$$
\phi(x)= |x-x_0|^{-(\alpha+2m)}\, \chi_{\Sigma}(x)
\quad\mbox{and}
\quad\psi(x)=
|x-x_0|^{-(\beta+2m)}\, \chi_{\Sigma}(x).
$$
Let $u$ and $v$ be non-negative and such that
\begin{eqnarray*}
\left\{\begin{array}{ccc}
(-\Delta)^m u= \phi &\mbox{ in }\Omega\\
(-\Delta)^m v=\psi&\mbox{ in }\Omega\\
\left(\frac{\partial}{\partial
\nu}\right)^{j}u=\left(\frac{\partial}{\partial \nu}\right)^{j}v=0
&\mbox{ on }\partial\Omega & 0\leq j\leq m-1.\end{array}\right.
\end{eqnarray*}
Then, it follows from (\ref{unoacotada}) that
$u\notin L^\infty(\O)$, $v\notin L^\infty(\O)$,
$$
v(x)^p\geq \left(C\, |x-x_0|^{-\beta}\chi_\Sigma(x)\right)^p=C\,
|x-x_0|^{-(\alpha+2m)}\, \chi_\Sigma(x)= C\, \phi(x)
$$
and
$$
u(x)^q\geq \left(C\, |x-x_0|^{-\alpha}\chi_\Sigma(x)\right)^q=C\,
|x-x_0|^{-(\beta+2m)}\, \chi_\Sigma(x)= C\, \psi(x).
$$
Therefore, defining
$a=\phi/v^p$ and $b=\psi/u^q$ we have that $a$ and $b$ are nonnegative
bounded functions, and $(u,v)$ solves
$$
(-\Delta)^m
u= a(x)\, v^p \mbox{\ \  and\ \  }(-\Delta)^m v= b(x)\, u^q.
$$
\hspace*{\fill} $\square$

\medskip

We end the paper by proving the observation given in Remark \ref{remarkcita}
concerning the optimality  of condition
$(2)$ in Proposition \ref{pro2.1}.

\begin{pro}
 Assume $1\leq p\leq q\leq\infty$ and
 $\frac{1}{p}-\frac{1}{q}>\frac{2m}{n-m}$.  Then there exists
 $f\in L^p_{d^m}(\O)$ such that $u\notin L^q_{d^m}(\O)$, where
 $u$ is the unique solution of $(\ref{2.1}).$
\end{pro}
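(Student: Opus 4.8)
The plan is to reuse the singular solution construction given above leading to (\ref{unoacotada}), changing only the strength of the singularity so that the right-hand side belongs to $L^p_{d^m}(\O)$ instead of merely $L^1_{d^m}(\O)$, while the associated solution fails to be in $L^q_{d^m}(\O)$. Fix $x_0\in\partial\O$ and, as above, choose $r>0$ and a revolution cone $\Sigma_1$ with vertex $x_0$ with $\Sigma:=\Sigma_1\cap B_{2r}(x_0)\subset\O$; after shrinking $r$ and the opening angle if necessary we may assume $c\,|x-x_0|\le d(x)\le|x-x_0|$ for all $x\in\Sigma$ (the upper bound is automatic since $x_0\in\partial\O$, and the lower bound, the interior cone property of the smooth domain $\O$, is exactly what lets one use the Green function bounds (\ref{g_1}), (\ref{g_2}), (\ref{g_3}) on $\Sigma$). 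For a parameter $\al>0$ to be chosen, set $f(x)=|x-x_0|^{-(\al+2m)}\chi_{\Sigma}(x)\ge 0$ and let $u\ge0$ be the unique solution of (\ref{2.1}) with this right-hand side; then, provided $0<\al<n-m$, estimate (\ref{unoacotada}) gives $u(x)\ge C\,|x-x_0|^{-\al}\chi_{\Sigma}(x)$.

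Both membership questions reduce to power counting on $\Sigma$. Since $d(x)\approx|x-x_0|$ there, the integral $\int_\Sigma |x-x_0|^{-(\al+2m)p}\,d^m\,dx$ is comparable to $\int_0^{2r}\rho^{-(\al+2m)p+m+n-1}\,d\rho$, finite precisely when $(\al+2m)p<n+m$, i.e. $\al<\frac{n+m}{p}-2m$; this gives $f\in L^p_{d^m}(\O)$ (hence $f\in L^1_{d^m}(\O)$, so $u$ is well defined and unique). Likewise $\int_\Sigma |u|^q\,d^m\,dx$ is bounded below by a constant times $\int_0^{2r}\rho^{-\al q+m+n-1}\,d\rho$, which diverges precisely when $\al q\ge n+m$, i.e. $\al\ge\frac{n+m}{q}$; when $q=\infty$ one needs instead only $\al>0$, since then $u\ge C|x-x_0|^{-\al}$ is already unbounded near $x_0$. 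Thus it suffices to find $\al$ with $\al>0$ and
$$\frac{n+m}{q}\le\al<\min\left\{\,n-m,\ \frac{n+m}{p}-2m\,\right\}.$$

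Such an $\al$ exists under the hypothesis $\frac1p-\frac1q>\frac{2m}{n-m}$. First, this forces $n>3m$, since the left-hand side is at most $1$. Second, it is strictly stronger than $\frac1p-\frac1q>\frac{2m}{n+m}$, which is equivalent to $\frac{n+m}{q}<\frac{n+m}{p}-2m$. Third, from $\frac1q<\frac1p-\frac{2m}{n-m}\le\frac{n-3m}{n-m}$ we get $q>\frac{n-m}{n-3m}$, and since $(n-m)^2-(n+m)(n-3m)=4m^2>0$ we have $\frac{n-m}{n-3m}>\frac{n+m}{n-m}$, hence $\frac{n+m}{q}<n-m$. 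Choosing, for instance, $\al=\frac{n+m}{q}$ when $q<\infty$ (and any admissible $\al>0$ when $q=\infty$) finishes the proof.

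The one genuinely delicate point, which I would write out carefully, is the comparison $d(x)\approx|x-x_0|$ on $\Sigma$ together with the reduction of the two weighted integrals to one-dimensional radial integrals, i.e. that the solid angle subtended by $\Sigma$ contributes only a finite positive multiplicative constant; everything else is the elementary inequality bookkeeping above. To sidestep even that, one may instead obtain the divergence of $\int_\Sigma|u|^q\,d^m$ by integrating only over a slightly narrower coaxial sub-cone $\Sigma'\subset\Sigma$ on which $d(x)\ge c|x-x_0|$ certainly holds, with no loss.
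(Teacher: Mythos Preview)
Your proof is correct and follows essentially the same route as the paper's: take $f(x)=|x-x_0|^{-(\alpha+2m)}\chi_\Sigma$, use $d(x)\approx|x-x_0|$ on the cone to get $f\in L^p_{d^m}$ when $\alpha<\frac{n+m}{p}-2m$, and use the lower bound (\ref{unoacotada}) together with $d(x)\ge c|x-x_0|$ to force $u\notin L^q_{d^m}$ when $\alpha\ge\frac{n+m}{q}$. Your parameter check (in particular the verification that $\frac{n+m}{q}<n-m$ via $q>\frac{n-m}{n-3m}>\frac{n+m}{n-m}$) and your separate handling of $q=\infty$ are more carefully spelled out than in the paper, but the construction and the key inequalities are the same.
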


\begin{proof}
Let  $0<\alpha<n-m$ and we define, as above,
$f(x)=|x-x_0|^{-(\alpha+2m)}\chi_{\Sigma}(x).$ Then we have
$$
\|f\|^p_{L^p_{d^m}(\O)}=\int_\Sigma |x-x_0|^{-(\alpha +2m)p}\,
d(x)^m\, dx\le \int_\Sigma |x-x_0|^{-(\alpha +2m)p +m}\, dx,
$$
and then, since $p<\frac{n+m}{\alpha +2m}$, $f\in L^p_{d^m}(\O)$.

But, for $x\in \Sigma$
there exists a positive constant $C$ such that
$d(x)\ge C |x-x_0|$, and therefore, it follows from (\ref{unoacotada}) that for
$q\geq
 \frac{n+m}{\alpha}$,
$u\notin
L^q_{d^m}(\O)$.
To conclude the proof we observe that,
since $\frac{1}{p}-\frac{1}{q}>\frac{2m}{n-m}$, we can choose
$\alpha\in (0, n-m)$ such that
$\frac{n+m}{q}<\alpha<\frac{n+m}{p-2m}$.
\hspace*{\fill} $\square$
\end{proof}

\medskip

Finally let us mention that, to our knowledge, it is not known what
happens in general in the limit case $\frac{1}{p}-\frac{1}{q}=\frac{2m}{n-m}$.
In the case $p>m+1$ we have proved
in \cite{DST2} that
\begin{equation*}
\|u\|_{L^q_{d^m}(\Omega)}\leq C\, \|f\|_{L^p_{d^m}(\Omega)}.
\end{equation*}


\begin{thebibliography}{99}

\bibitem{BC}
Ha{\"{\i}}m Brezis and Xavier Cabr{\'e}, \emph{Some simple nonlinear {PDE}'s
  without solutions}, Boll. Unione Mat. Ital. Sez. B Artic. Ric. Mat. (8)
  \textbf{1} (1998), no.~2, 223--262. 

\bibitem{CS}
Philippe Cl{\'e}ment and Guido Sweers, \emph{Uniform anti-maximum principle for
  polyharmonic boundary value problems}, Proc. Amer. Math. Soc. \textbf{129}
  (2001), no.~2, 467--474. 

\bibitem{D-S}
Anna Dall'Acqua and Guido Sweers, \emph{Estimates for {G}reen function and
  {P}oisson kernels of higher-order {D}irichlet boundary value problems}, J.
  Differential Equations \textbf{205} (2004), no.~2, 466--487. 
  

\bibitem{DS2}
\bysame, \emph{On domains for which the clamped plate system is positivity
  preserving}, Partial differential equations and inverse problems, Contemp.
  Math., vol. 362, Amer. Math. Soc., Providence, RI, 2004, pp.~133--144.
  

\bibitem{DST2}
R.~G. Dur{\'a}n, M.~Sanmartino, and M.~Toschi, \emph{Weighted a priori
  estimates for solutions of $(-\bigtriangleup)^m u=f$ with homogeneous
  dirichlet conditions poisson equation}, arxiv: 1006.1337v1 (2010).

\bibitem{GS}
Hans-Christoph Grunau and Guido Sweers, \emph{Positivity for equations
  involving polyharmonic operators with {D}irichlet boundary conditions}, Math.
  Ann. \textbf{307} (1997), no.~4, 589--626. 

\bibitem{PS}
Philippe Souplet, \emph{A survey on {$L\sp p\sb \delta$} spaces and their
  applications to nonlinear elliptic and parabolic problems}, Nonlinear partial
  differential equations and their applications, GAKUTO Internat. Ser. Math.
  Sci. Appl., vol.~20, Gakk\=otosho, Tokyo, 2004, pp.~464--479.

\bibitem{PS2}
\bysame, \emph{Optimal regularity conditions for elliptic problems via {$L\sp
  p\sb \delta$}-spaces}, Duke Math. J. \textbf{127} (2005), no.~1, 175--192.
  

\end{thebibliography}
\end{document}